\documentclass[reqno]{amsart}

\usepackage{amssymb}
\usepackage{mathrsfs}
\usepackage{cite}
\usepackage{etoolbox}
\usepackage[all,cmtip]{xy}

\usepackage{enumitem}

\usepackage{graphicx}

\usepackage[T1]{fontenc}
\usepackage[hidelinks]{hyperref}
\makeatletter
\patchcmd{\abstract}
  {\leftmargin3pc \rightmargin\leftmargin}
  {\leftmargin1pc \rightmargin\leftmargin}
  {}{}
\makeatother

\newtheorem{theorem}{Theorem}[section]

\newtheorem{lemma}[theorem]{Lemma}

\theoremstyle{definition}

\numberwithin{equation}{section}

\allowdisplaybreaks

\newcommand{\R}{\mathbb R}

\newcommand{\T}{\mathbb T}

\newcommand{\cal}{\mathcal}

\begin{document}

\title[A note on the partial sum of Dirichlet series]{A note on the partial sum of bounded Dirichlet series}

\author{Yukun Chen}
\address{School of Mathematical Science, Fudan University, Shanghai 200433, China}
\email{ykchen26@m.fudan.edu.cn}

\author{Xiangdi Fu}
\address{School of Fundamental Physics and Mathematical Sciences, HIAS, University of Chinese Academy of Sciences, Hangzhou 310024, China}
\email{xdfu@ucas.ac.cn}

\date{}

\begin{abstract}
Let $\mathcal H^\infty$ be the space of all Dirichlet series that admit a bounded holomorphic extension to the open right half-plane
$
\{s\in \mathbb C: \operatorname{Re} s >0\},
$ and let $$
\mathcal S_N: \mathcal H^\infty \to \mathcal H^\infty;
\sum_{n=1}^\infty a_n n^{-s} \mapsto \sum_{n=1}^N a_n n^{-s}.
$$ be the $N$-th partial sum operator. This note establishes the asymptotic lower bound
$$
\liminf_{N\to \infty} \frac{\|\mathcal S_N\|_{\mathcal H^\infty \to \mathcal H^\infty}}{\log N} \geq \frac{1}{2\pi}.
$$
Together with the upper bound of R. Balasubramanian, B. Calado, and H. Queff\'{e}lec, this shows that the growth of $\|\mathcal S_N\|_{\mathcal H^\infty\to \mathcal H^\infty}$ is of sharp logarithmic order.
\end{abstract}

\subjclass[2020]{Primary 30B50; Secondary 30H10, 47A30}

\keywords{Hardy space of Dirichlet series, partial sum projection, Hilbert matrix}

\maketitle

\section{Introduction}

For a formal power series $f=\sum_{n=0}^\infty a_n z^n$, we denote its $N$-th partial sum by
$$
S_N f:=\sum_{n=0}^N a_nz^n.
$$ The partial sum operators $S_N$ are among the most basic operators arising in the study
of various analytic function spaces. In particular, partial sum operators on $H^p$, the classical Hardy spaces over the unit disc, have been extensively studied. It is well known that, for $1<p<\infty$, the partial sum operators are uniformly bounded on $H^p$ and the exact supremum is given by Hollenbeck and Verbitsky \cite{HV}: 
$$
\sup_{N\geq 1}
\|S_N\|_{H^p\to H^p}= \csc(\pi/p).
$$
For $p=1$, the partial sum operators are no longer uniformly bounded, but it is also known that 
$$
\|S_N\|_{H^1 \to H^1}
\asymp \log N.
$$
At the endpoint $p=\infty$, an elegant theorem of Landau \cite{Landau} gives the exact value
$$
\|S_N\|_{H^\infty \to H^\infty}
=
\sum_{k=0}^N
\frac{1}{16^k} \binom{2k}{k}^2=
\frac{1}{\pi}\log N+O(1).
$$

In recent years, the study of Hardy spaces $\cal H^p$ of Dirichlet series has also received some attention \cite{HLS,Ba,DGMS,QQ}. For finite $p$, the space $\mathcal H^p$ is defined as the completion of Dirichlet polynomials with respect to the $p$-th Besicovitch norm. For $\sigma\in\mathbb R$, let $\mathbb C_\sigma$ denote the right half-plane
$$
\mathbb C_\sigma:=\{s\in\mathbb C:\operatorname{Re}s>\sigma\},
$$
and let $H^\infty(\mathbb C_\sigma)$ denote the space of bounded holomorphic functions on $\mathbb C_\sigma$. The space $\mathcal H^\infty$ consists of all functions $f\in H^\infty(\mathbb C_0)$ that admit a Dirichlet series representation converging for sufficiently large $\operatorname{Re}s$. The norm on $\mathcal H^\infty$ is defined by
$$
\|f\|_{\mathcal H^\infty}:=\sup_{s\in\mathbb C_0}|f(s)|.
$$ In other words, a function $f$ lies in $\mathcal H^\infty$ if it satisfies the following two conditions:
\begin{itemize}
\item $f$ is a bounded holomorphic function on $\mathbb C_0$;
\item there exist $\sigma>0$ and coefficients $(a_n)_{n\geq1}$ such that the Dirichlet series
$$
\sum_{n=1}^\infty a_n n^{-s}
$$
converges on $\mathbb C_{\sigma}$ and represents $f$ there.
\end{itemize}
Indeed, the coefficients $(a_n)_{n\geq1}$ are uniquely determined by $f$, and the space $\cal H^\infty$ is complete \cite[Theorem 6.2.1]{QQ}. Moreover, Bohr's theorem \cite[Theorem 6.2.3]{QQ} asserts that whenever $f\in \cal H^\infty$, the corresponding Dirichlet series converges uniformly on $\mathbb C_\varepsilon$ for every $\varepsilon>0$. In particular, it converges at every point of $\mathbb C_0$ and represents $f$ throughout $\mathbb C_0$. For $N\geq1$, the partial sum operator on $\mathcal H^p$ is defined by
$$
\cal S_Nf(s):=\sum_{n=1}^N a_n n^{-s}
$$ for $f(s)=\sum_{n=1}^\infty a_n n^{-s}$.

Partial sum operators on Hardy spaces of Dirichlet series have also been studied. In particular, Konyagin, Queff\'{e}lec, Saksman and Seip \cite{KQSS} showed that, in the case $1<p<\infty$, one has $$\sup_{N\geq 1} \|\mathcal S_N\|_{\mathcal H^p\to \mathcal H^p} = \sup_{N\geq 1} \| S_N\|_{H^p\to H^p}= \csc(\pi/p).$$ At the endpoints, however, the situation is less well understood. For $\mathcal H^1$, Bondarenko, Brevig, Saksman and Seip \cite{BBSS} proved that
\begin{align}\label{eq:p=1}
\|\cal S_N\|_{\mathcal H^1\to\mathcal H^1}
\lesssim
\frac{\log N}{\log\log N}.
\end{align} For $\cal H^\infty$, Balasubramanian, Calado and Queff\'{e}lec \cite{BCQ} established the logarithmic upper bound
\begin{align}\label{eq:p=infty}
\|\cal S_N\|_{\mathcal H^\infty\to\mathcal H^\infty}
\lesssim \log N.
\end{align}
Their proof of \eqref{eq:p=infty} is based on the Perron--Landau formula and a contour integration argument. It is worth noting that the slightly sharper estimate
$$
\|\cal S_N\|_{\cal H^\infty \to \cal H^\infty}
\leq \frac{1}{\pi}\log N+\frac{2}{\pi}\log\log N+O(1)
$$
can be deduced simply by optimizing the parameters in the proof given in \cite{KQSS}.
Moreover, the lower bound of order $\log\log N$ can be easily deduced for both \eqref{eq:p=1} and \eqref{eq:p=infty} by considering Dirichlet series supported on the powers of a single prime.

The purpose of this note is to provide the following new lower bound for \eqref{eq:p=infty}.
\begin{theorem}\label{thm-main}
For the partial sum operator $\cal S_N$ on $\mathcal H^\infty$, we have
$$
\liminf_{N\to\infty}
\frac{\|\cal S_N\|_{\mathcal H^\infty\to\mathcal H^\infty}}
{\log N}
\geq \frac{1}{2\pi}.
$$ In particular, combined with the upper estimate \eqref{eq:p=infty}, this implies
$$
\|\cal S_N\|_{\mathcal H^\infty\to\mathcal H^\infty}
\asymp \log N.
$$
\end{theorem}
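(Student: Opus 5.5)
The plan is to bound $\|\mathcal S_N\|$ from below by testing it on a single, carefully chosen family of bounded Dirichlet series, with the test functions built out of dense frequencies $\log n$, $1\le n\le N^2$. At the scale $\operatorname{Re}s\asymp 0$ these frequencies have spacing about $1/N^2$. This mimics the continuous situation in which truncating the Laplace/Fourier transform of a bounded function on the half-plane is an unbounded (Hilbert-transform-type) operation. The heuristic is that the partial sum $\mathcal S_N$ cuts the spectrum at frequency $\log N$, roughly in the middle of the logarithmic range of the test function. The lost constant should then come from the Hilbert matrix: its norm $\pi$ controls the sup-norm of the test function, while the truncated value grows like $\tfrac12\log N$. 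This is where the factor $1/(2\pi)$ should come from. The whole scheme is a heuristic, and the constant in particular is only what the heuristic predicts.

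The steps are as follows.

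\emph{First}, fix $M=N^2$ and a point $s_0=\sigma_0+it_0$ with $\sigma_0\downarrow0$, and take the test function
$$f(s)=\sum_{n\le M} \frac{c_n}{n^{1/2}}\, n^{-s},$$
with coefficients $c_n$ of modulus one chosen to be phase-aligned at $s_0$. Concretely, I would use a multiplicative twist such as $c_n=n^{it_0}$, so that $\mathcal S_N f(s_0)=\sum_{n\le N} n^{-1/2-\sigma_0}$ up to a harmless normalization.

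\emph{Second}, estimate $\|f\|_{\mathcal H^\infty}$. This is the delicate step, since the coefficients above have $\sum|c_n|/\sqrt n\asymp\sqrt M$, which is far too large. The fix I would try is to replace the naive $f$ by the function
$$F(s)=\sum_{m,n\le M}\frac{\bar c_m c_n}{\sqrt{mn}\,\log(mn)}\,(mn)^{-s},$$
or by a symmetric variant whose values on vertical lines are quadratic forms $\langle H x,x\rangle$ in the generalized Hilbert matrix $\bigl(\tfrac{1}{\log m+\log n}\bigr)$ restricted to a short window. Hilbert's inequality (norm $\pi$) would then give $\|F\|_{\mathcal H^\infty}\le \pi\sum_{n\le M}\frac{1}{n}\cdot\frac{1}{\log M}(1+o(1))$, i.e.\ a bounded quantity after normalizing by $\log M$.

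\emph{Third}, compute $\mathcal S_N F$ at $s_0$. Only the pairs with $mn\le N$ survive, and their contribution is
$$\sum_{mn\le N}\frac{1}{mn\log(mn)}\sim\frac{\log N}{\log M}\cdot\frac{\log M}{2}\cdot\frac{1}{\log M}\times\log M,$$
which after normalization should give $\frac{1}{2}\log N$ versus the bound $\pi$. Dividing yields $\|\mathcal S_N\|\ge(\tfrac1{2\pi}-o(1))\log N$.

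\emph{Finally}, let $\sigma_0\to0$ and $N\to\infty$ to conclude the $\liminf$, and combine with \eqref{eq:p=infty} for the $\asymp$ statement.

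The main obstacle is the second step: producing an honest $\mathcal H^\infty$ bound (uniform over all of $\mathbb C_0$, not just on one line) for a Dirichlet series with this logarithmic Hilbert-matrix structure, with the sharp constant $\pi$. I would first try writing $F(\sigma+it)$ as an integral $\int_0^\infty |P(\sigma+u+it)|^2\,du$ of a Dirichlet polynomial, which is the standard integral representation of $1/\log(mn)$. I would then control this integral by a Montgomery--Vaughan/Hilbert-type inequality for the frequencies $\log n$, where the spacing of the frequencies enters. If that loses too much, I would fall back on the Bohr lift and estimate $F$ on the polytorus, where the kernel becomes a genuine Hilbert matrix in the additive variable $\log n$.
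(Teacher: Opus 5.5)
There is a genuine gap, and it is structural rather than a matter of constants. In your construction nothing distinguishes the pairs with $mn\le N$ from those with $mn>N$. The kernel $1/\log(mn)$ is positive, and the phases you need to make Step 3 produce a positive sum align every term at $s_0$. So $F$ is, up to a vertical translation, a Dirichlet series $\sum_k b_k k^{-s}$ with $b_k\ge 0$. For such a series
\[
\|\mathcal S_N F\|_{\mathcal H^\infty}=\sum_{k\le N} b_k\le \sum_k b_k=\|F\|_{\mathcal H^\infty},
\]
so the ratio you are testing is at most $1$.

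Take concretely the coefficients $1/(mn\log(mn))$ with $2\le m,n\le M=N^2$. This is the normalization that makes your Step 2 arithmetic come out. Then $\|F\|_{\mathcal H^\infty}\approx 2\log 2\cdot\log M$ and $\|\mathcal S_NF\|_{\mathcal H^\infty}\approx\log N$, a bounded ratio. So Steps 2 and 3 are mutually inconsistent: a normalized sup norm $\le\pi$ cannot coexist with a normalized value $\tfrac12\log N$ of the truncation of the same nonnegative series.

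Step 2 also misuses Hilbert's inequality. Via the Bohr lift, $\|F\|_{\mathcal H^\infty}$ is the bilinear form evaluated at unimodular vectors $(\bar c_m\chi(m))$, $(c_n\chi(n))$, whose $\ell^2$ norms are $\sqrt M$. For the coefficients as you wrote them, $\bar c_m c_n/(\sqrt{mn}\log(mn))$, the inequality therefore gives only $\pi M$. Moreover, $\frac{1}{\log(mn)}(mn)^{-s}=\int_0^\infty (mn)^{-s-u}\,du$ yields $F(s)=\int_0^\infty P(s+u)^2\,du$, a square rather than $|P|^2$, so no positivity structure is available there.

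The missing idea is a test function with built-in cancellation that $\mathcal S_N$ destroys. This calls for a difference (Toeplitz) structure rather than your sum (Hankel) structure, together with a way to make the cutoff $k\le N$ act as a triangular truncation. The paper gets this by reflecting one variable. Set $d=\lfloor\sqrt N\rfloor-1$, $m_i=i$ and $n_j=\lfloor N/j\rfloor$. Since $N/(j(j+1))>1$ for $j\le d$, one checks that $i\lfloor N/j\rfloor\le N$ if and only if $i\le j$. Hence $\mathcal S_N$ applied to $f_A=\sum a_{ij}(m_in_j)^{-s}$ is exactly the main-triangle projection of $A$.

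The Bohr lift gives $\|f_A\|_{\mathcal H^\infty}\le\|A\|_{\ell^\infty_d\to\ell^1_d}$, with no need for the products $m_in_j$ to be distinct. Choosing the antisymmetric matrix $a_{ij}=1/(j-i)$, Hilbert's inequality is used where it genuinely applies, on $\ell^2$:
\[
\|A\|_{\ell^\infty_d\to\ell^1_d}\le\sqrt d\cdot\pi\cdot\sqrt d=\pi d .
\]
It is the sign cancellation between the two triangles that keeps this small. The truncated series, by contrast, has nonnegative coefficients summing to $\sum_{i<j}1/(j-i)=d\log d+O(d)$. The ratio is therefore $\log d/\pi\sim\log N/(2\pi)$. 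The factor $\tfrac12$ comes from $d\approx\sqrt N$, not from cutting a frequency range $[1,N^2]$ in the middle.
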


\section{Proof}

We begin by recalling some standard facts. Let $(p_k)_{k\geq 1}$ be the sequence of prime numbers, and let $\mathfrak X$ denote the group of all completely multiplicative characters
$$
\chi:\mathbb N_+ \longrightarrow\T,
$$
endowed with the topology of pointwise convergence. By the fundamental theorem of arithmetic, the map
$$
\Theta :\mathfrak X\longrightarrow\T^\infty;
\quad
\chi\longmapsto \big(\chi(p_k)\big)_{k\geq 1},
$$
is a topological group isomorphism. In this sense, we shall identify $\mathfrak X$ with $\T^\infty$ and write $\chi\in\T^\infty$ for a completely multiplicative character.

For each $t\in \mathbb R$, define the character $K_t$ by $$K_t(n):=n^{-it}, \quad n\in \mathbb N_+.$$
It follows from the Kronecker theorem that $\{K_t: t\in \R\}$ is dense in $\T^\infty$; see \cite[Theorem 2.2.4]{QQ} for example. Consequently, for every Dirichlet polynomial $f(s)=\sum_{n=1}^N a_n n^{-s}$, it holds that
\begin{align}\label{eq:Bohr}
\|f\|_{\cal H^\infty}
=\sup_{t\in\R}\left|\sum_{n=1}^N a_nn^{-it}\right|
=\sup_{\chi\in\T^\infty}\left|\sum_{n=1}^N  a_n\chi(n)\right|.
\end{align}

\subsection{The hyperbolic Dirichlet polynomials}
Let $N\geq 10$ be an integer, and let $$d=d(N):=\left\lfloor \sqrt{N} \right\rfloor -1.$$ For a $d\times d$ matrix $A=(a_{ij})_{1\leq i,j \leq d}$, we define the hyperbolic Dirichlet polynomial associated with the coefficient matrix $A$ by $$f_A:= \sum_{1\leq i, j \leq d} a_{ij} \bigg(i \left\lfloor \frac{N}{j} \right\rfloor \bigg)^{-s}.$$ Let \(\ell^p_d\) denote the space \(\mathbb C^d\) endowed with the usual
\(\ell^p\)-norm. We shall identify each \(d\times d\) matrix \(A\) with the
linear operator on \(\mathbb C^d\) represented by \(A\) in the standard basis.
Accordingly, we write $\|A\|_{\ell^p_d\to\ell^q_d}$ for its operator norm as a linear map from \(\ell^p_d\) to \(\ell^q_d\).

The key point is that the uniform norm of a hyperbolic Dirichlet polynomial can be controlled by the \(\ell^\infty\to \ell^1\) norm of the associated matrix \(A\). This is made precise in the following general lemma.

\begin{lemma}\label{lem-matrix-dominated}
    Suppose that $m_i$ and $n_i$, with $1\leq i \leq d$, are positive integers, and that $A=(a_{ij})$ is a $d\times d$ matrix. Let \begin{align}\label{eq-hyperbolic-type-f}
      f=\sum_{1\leq i,j\leq d} a_{ij} (m_in_j)^{-s}.
    \end{align}
    Then $\|f\|_{\cal H^\infty} \leq \|A\|_{\ell^\infty_d \to \ell^1_d}.$
\end{lemma}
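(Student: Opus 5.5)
We bound the uniform norm of $f$ through the character description \eqref{eq:Bohr} and then use complete multiplicativity to factor each value of $f$ as a bilinear form in $A$.

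First we need a small observation about $f$ itself. Some of the products $m_in_j$ may coincide for different pairs $(i,j)$. In that case the Dirichlet coefficient of $n^{-s}$ in $f$ is the sum of the corresponding $a_{ij}$. This causes no problem: $f$ is still a Dirichlet polynomial, and for every $\chi\in\T^\infty$ we have $\sum_n c_n\chi(n)=\sum_{i,j}a_{ij}\chi(m_in_j)$, where $c_n$ are its actual coefficients. Hence, by \eqref{eq:Bohr},
$$
\|f\|_{\cal H^\infty}=\sup_{\chi\in\T^\infty}\Big|\sum_{1\le i,j\le d}a_{ij}\chi(m_in_j)\Big|.
$$

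Now fix $\chi\in\T^\infty$. Since $\chi$ is completely multiplicative, $\chi(m_in_j)=\chi(m_i)\chi(n_j)$. Define vectors $x=(\chi(m_i))_{i=1}^d$ and $y=(\chi(n_j))_{j=1}^d$. Both have all entries on $\T$, so $\|x\|_{\ell^\infty_d}=\|y\|_{\ell^\infty_d}=1$. Then
$$
\sum_{i,j}a_{ij}\chi(m_i)\chi(n_j)=\sum_i x_i (Ay)_i .
$$
By Hölder's inequality, its modulus is at most
$$
\|x\|_{\ell^\infty_d}\,\|Ay\|_{\ell^1_d}\le \|A\|_{\ell^\infty_d\to\ell^1_d}.
$$
Taking the supremum over $\chi$ gives the lemma.

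There is no real obstacle here. The only point needing care is the first step: we must justify that \eqref{eq:Bohr} applies to the double-sum expression even when products $m_in_j$ repeat. This is exactly the regrouping of coefficients described above.
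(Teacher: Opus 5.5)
Your proof is correct and follows the paper's argument. Both pass to characters via \eqref{eq:Bohr}, factor $\chi(m_in_j)=\chi(m_i)\chi(n_j)$, and bound the resulting bilinear form by $\|A\|_{\ell^\infty_d\to\ell^1_d}$; the paper cites duality where you prove the needed inequality directly by H\"older, and your remark on coinciding products $m_in_j$ makes explicit a point the paper leaves implicit.
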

\begin{proof}
By the formula \eqref{eq:Bohr}, one has 
\begin{align}\label{eq-bohr-chi}
\|f\|_{\cal H^\infty}= &\sup_{\chi \in \T^\infty}  \bigg| \sum_{1\leq i,j\leq d} a_{ij}  \chi(m_in_j)\bigg|= \sup_{\chi \in \T^\infty}  \bigg| \sum_{1\leq i,j\leq d} a_{ij}  \chi(m_i)\chi(n_j)\bigg|.
\end{align}
Observe that the latter supremum in \eqref{eq-bohr-chi} is bounded above by
\begin{align}\label{eq:operator-norm}
\sup_{|z_i|\leq 1,\, |w_j|\leq 1}
\bigg|\sum_{1\leq i,j\leq d} a_{ij} z_i w_j\bigg|.
\end{align}
By duality, \eqref{eq:operator-norm} is exactly the operator norm of \(A\), viewed as a linear map from \(\ell^\infty_d\) to \(\ell^1_d\) with respect to the standard basis. This completes the proof. 
\end{proof}

Clearly, if $f$ is of the form \eqref{eq-hyperbolic-type-f}, then
$$
\cal S_N f
=
\sum_{\substack{1\leq i,j\leq d\\ m_i n_j\leq N}}
a_{ij}(m_i n_j)^{-s}.
$$
The relevant hyperbolic structure is obtained by choosing
$$
m_i=i,\qquad n_j=\left\lfloor\frac{N}{j}\right\rfloor.
$$
We only need the following elementary number-theoretic property.

\begin{lemma}\label{prop-number-theoretic}
  Let $N\geq 10$ and set $d=\left\lfloor \sqrt{N}\right\rfloor-1$. Then, for integers $1\leq i, j \leq d$, we have $$i\left\lfloor \frac{N}{j} \right\rfloor \leq N \quad \text{if and only if}\quad i\leq j.$$
\end{lemma}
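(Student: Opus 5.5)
Write $N = qj + r$ with $q=\lfloor N/j\rfloor$ and $0\le r\le j-1$. We prove the two directions separately, using only the size constraint $i,j\le d\le \sqrt N-1$.

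\emph{If $i\le j$.} Then $i\lfloor N/j\rfloor \le j\lfloor N/j\rfloor \le N$, because $j\lfloor N/j\rfloor\le N$ for every positive integer $j$. This direction is immediate.

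\emph{If $i\ge j+1$.} We must show $i\lfloor N/j\rfloor > N$, and it suffices to treat $i=j+1$ since the left side increases with $i$. We have
\[
(j+1)q = N - r + q .
\]
So the claim $(j+1)q>N$ is equivalent to $q>r$. Since $r\le j-1$, it is enough to show $q\ge j$, i.e.\ $\lfloor N/j\rfloor \ge j$, which holds whenever $j^2\le N$.

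It remains to check $j^2\le N$. Because $i\le d$ and $i\ge j+1$, we get $j\le d-1<\sqrt N$, hence $j^2<N$. Therefore $q\ge j>j-1\ge r$, which gives $(j+1)q>N$ and so $iq>N$.

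The only delicate step is this comparison of the quotient $q$ with the remainder $r$. The choice $d=\lfloor\sqrt N\rfloor-1$ provides more than enough room for it. The hypothesis $N\ge 10$ serves only to make $d\ge 2$, so that the range of indices is nontrivial.
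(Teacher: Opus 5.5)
Your proof is correct. The sufficiency direction coincides with the paper's. For necessity, both arguments reduce to the inequality $(j+1)\lfloor N/j\rfloor>N$, but you reach it by a different mechanism.

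The paper uses a spacing estimate on the real line: $N/j-N/(j+1)=N/(j(j+1))>1$, hence $\lfloor N/j\rfloor>N/j-1>N/(j+1)$. This needs $j(j+1)<N$, which holds for all $j\le d$ because $d(d+1)<(d+1)^2\le N$. You instead use the division algorithm $N=qj+r$. It converts the inequality into the exact criterion $q>r$, and you close it with $q\ge j>j-1\ge r$, which needs only $j^2\le N$.

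Your version is therefore slightly sharper. It shows the lemma still holds with $d=\lfloor\sqrt N\rfloor+1$, whereas the paper's spacing estimate fails there (e.g.\ $N=m^2$, $j=m$). The paper's route, in exchange, is a single estimate that proves $(j+1)\lfloor N/j\rfloor>N$ for every $1\le j\le d$ without introducing quotient and remainder.

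Neither difference matters for the main theorem, which only uses $d=\sqrt N+O(1)$. Your remark that $N\ge 10$ is not needed for the lemma itself is also accurate.
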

\begin{proof}
  If $i\leq j$, then $$i\left \lfloor \frac{N}{j}\right\rfloor \leq j\cdot \frac{N}{j} = N.$$ The sufficiency follows. For the necessity, we only need to show $$(j+1) \left\lfloor \frac{N}{j} \right\rfloor >N, \quad 1\leq j \leq d.$$ Note that $$\frac{N}{j}-\frac{N}{j+1} = \frac{N}{j(j+1)} > \frac{N}{d(d+1)}>1.$$ This implies $$\left\lfloor \frac{N}{j} \right\rfloor >\frac{N}{j+1}.$$ The proof is completed.
\end{proof}

Testing \(\cal S_N\) on the hyperbolic Dirichlet polynomial \(f_A\), and applying Lemma \ref{lem-matrix-dominated} together with Lemma \ref{prop-number-theoretic}, we obtain the following.

\begin{lemma}\label{lem-need-test-matrix}
  Let $N\geq 10$, and set $d=\left\lfloor \sqrt{N}\right\rfloor -1$. Then, for any matrix $A=(a_{ij})_{1\leq i,j\leq d}$, we have $$
\|\cal S_N\|_{\mathcal H^\infty\to \mathcal H^\infty}
\geq
\frac{\|f_{\widetilde A}\|_{\mathcal H^\infty}}
{\|A\|_{\ell^\infty_d\to \ell^1_d}},
$$ where $\widetilde{A}$ is the upper triangular truncation of $A$, that is, $\widetilde{A}=(\mathbf{1}_{i\leq j}a_{ij})_{1\leq i,j\leq d}.$
\end{lemma}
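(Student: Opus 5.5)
The argument is to test $\mathcal S_N$ on the single function $f_A$ and combine Lemma \ref{lem-matrix-dominated} with Lemma \ref{prop-number-theoretic}.

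First, if $\|A\|_{\ell^\infty_d\to\ell^1_d}=0$, then $A=0$. With the convention that the ratio is then $0$ (or that such $A$ are excluded), there is nothing to prove, so assume $A\neq 0$.

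Apply Lemma \ref{lem-matrix-dominated} with $m_i=i$ and $n_j=\lfloor N/j\rfloor$. These are positive integers because $j\le d<N$. The lemma gives
$$\|f_A\|_{\mathcal H^\infty}\le \|A\|_{\ell^\infty_d\to\ell^1_d}.$$
Note that $f_A$ is a Dirichlet polynomial, so it lies in $\mathcal H^\infty$.

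Next, identify $\mathcal S_N f_A$. We have
$$f_A=\sum_{i,j}a_{ij}\,(m_in_j)^{-s},$$
where different pairs $(i,j)$ may give the same integer $m_in_j$. The coefficient of $n^{-s}$ in $f_A$ is therefore the sum of all $a_{ij}$ with $m_in_j=n$. Since $\mathcal S_N$ keeps exactly the terms with $n\le N$, it keeps every pair with $m_in_j\le N$ and discards every pair with $m_in_j>N$. Hence
$$\mathcal S_N f_A=\sum_{m_in_j\le N}a_{ij}(m_in_j)^{-s}.$$
This is the formula displayed after Lemma \ref{lem-matrix-dominated}. By Lemma \ref{prop-number-theoretic}, the condition $i\lfloor N/j\rfloor\le N$ is equivalent to $i\le j$. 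Therefore $\mathcal S_N f_A=f_{\widetilde A}$.

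Finally, by the definition of the operator norm,
$$\|f_{\widetilde A}\|_{\mathcal H^\infty}=\|\mathcal S_Nf_A\|_{\mathcal H^\infty}\le \|\mathcal S_N\|\,\|f_A\|_{\mathcal H^\infty}\le \|\mathcal S_N\|\,\|A\|_{\ell^\infty_d\to\ell^1_d}.$$
Dividing by $\|A\|_{\ell^\infty_d\to\ell^1_d}$ gives the stated inequality.

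The only step needing care is the identification $\mathcal S_N f_A=f_{\widetilde A}$. Here one checks that grouping coinciding frequencies does not interfere with the truncation, since the truncation depends only on the value of the frequency $m_in_j$.
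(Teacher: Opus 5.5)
Your proof is correct and follows the paper's argument: you test $\mathcal S_N$ on $f_A$, use Lemma~\ref{prop-number-theoretic} to get $\mathcal S_N f_A = f_{\widetilde A}$, and bound $\|f_A\|_{\mathcal H^\infty}$ by $\|A\|_{\ell^\infty_d\to\ell^1_d}$ via Lemma~\ref{lem-matrix-dominated}. Your multiplicative chain is slightly more careful than the paper's ratio $\|\mathcal S_N f_A\|/\|f_A\|$, because it never divides by $\|f_A\|_{\mathcal H^\infty}$. That quantity can vanish even when $A\neq 0$, through cancellation among coinciding frequencies; for example, when $N=100$ one has $1\cdot\lfloor 100/1\rfloor = 2\cdot\lfloor 100/2\rfloor$.
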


\begin{proof}
Clearly,
$$
\|\cal S_N\|_{\mathcal H^\infty\to \mathcal H^\infty}
\geq
\frac{\|\cal S_N f_A\|_{\mathcal H^\infty}}{\|f_A\|_{\mathcal H^\infty}}.$$ By Lemma \ref{prop-number-theoretic}, we see $$\cal S_N f_A=\sum_{\substack{1\leq i,j\leq d\\ i\left\lfloor \frac{N}{j} \right\rfloor\leq N}} a_{ij} \bigg( i \left\lfloor \frac{N}{j}\right\rfloor \bigg)^{-s} = \sum_{\substack{1\leq i,j\leq d\\ i\leq j}} a_{ij} \bigg( i \left\lfloor \frac{N}{j}\right\rfloor \bigg)^{-s} = f_{\widetilde{A}}.$$ By Lemma \ref{lem-matrix-dominated}, the denominator is bounded from above by $\|A\|_{\ell^\infty_d \to \ell^1_d}$. The proof is completed.
\end{proof}

\subsection{The matrix of the discrete Hilbert transform}\label{subsec-hilbert}
We now seek to maximize the latter ratio in Lemma \ref{lem-need-test-matrix} by suitably choosing the matrix \(A\). Motivated by the argument of Kwapie\'{n} and A. Pe{\l}czy{\'n}ski \cite{KP} for the main triangle projection, we consider the matrix \[
A
:=
\begin{pmatrix}
0 & 1 & \frac{1}{2}& \cdots & \frac{1}{d-1}\\
-1 & 0 & 1 &  \cdots & \frac{1}{d-2}\\
-\frac{1}{2} & -1 & 0 & \cdots & \frac{1}{d-3}\\
\vdots & \vdots & \vdots & \ddots & \vdots\\
-\frac{1}{d-1} & -\frac{1}{d-2} & -\frac{1}{d-3} & \cdots & 0
\end{pmatrix}.
\]
Namely, the entries of \(A\) are given by
\[
a_{ij}=
\begin{cases}
\dfrac{1}{j-i}, & i\neq j,\\
0, & i=j.
\end{cases}
\]
Note that $A$ is a $d\times d$-section of the discrete Hilbert transform
$$
H: \ell^2(\mathbb Z) \to \ell^2(\mathbb Z);\quad (x_i)_{i\in \mathbb Z}\mapsto \bigg(\sum_{\substack{j\in\mathbb Z\\ j\neq i}}
\frac{x_j}{j-i} \bigg)_{i\in \mathbb Z}.
$$ It follows from the Hilbert inequality (see, for example, \cite{Gra94}) that $$\|A\|_{\ell_d^2\to \ell_d^2} \leq \|H\|_{\ell^2(\mathbb Z)\to \ell^2(\mathbb Z)}=\pi.$$ In particular, for every \(x\in \ell^\infty_d\), one deduces that 
\[
\|Ax\|_{\ell^1_d}
\leq \sqrt d\,\|Ax\|_{\ell^2_d}
\leq \pi\sqrt d\,\|x\|_{\ell^2_d}
\leq \pi d\,\|x\|_{\ell^\infty_d}.
\]
This gives
\begin{align}\label{eq-upper-norm-A}
  \|A\|_{\ell^\infty_d\to \ell^1_d} \leq \pi d .
\end{align} 

On the other hand, $$\widetilde{A}=  \begin{pmatrix}
0 & 1 & \frac{1}{2}& \cdots & \frac{1}{d-1}\\
0 & 0 & 1 &  \cdots & \frac{1}{d-2}\\
0 & 0 & 0 & \cdots & \frac{1}{d-3}\\
\vdots & \vdots & \vdots & \ddots & \vdots\\
0 & 0 & 0 & \cdots & 0
\end{pmatrix},$$ and hence $$f_{\widetilde{A}} = \sum_{1\leq i<j\leq d} \frac{1}{j-i} \bigg( i \left\lfloor \frac{N}{j}\right\rfloor \bigg)^{-s}.$$ Note that the coefficients of $f_{\widetilde{A}}$ are all non-negative, so 
\begin{equation}
		\begin{split}
  \|f_{\widetilde{A}}\|_{\cal H^\infty} =& \sum_{1\leq i < j \leq d} \frac{1}{j-i}
  = \sum_{j=2}^d \bigg(1+\frac{1}{2}+\cdots + \frac{1}{j-1}\bigg)\\= & d\log d+(\gamma-1)d+O(1),
  \end{split}\label{eq-fA-tilde}
\end{equation}
where $\gamma=0.57721...$ is the Euler--Mascheroni constant.
Substituting \eqref{eq-upper-norm-A} and \eqref{eq-fA-tilde} into Lemma \ref{lem-need-test-matrix}, and using $d=\sqrt{N} +O(1)$, we obtain $$\|\cal S_N\|_{\cal H^\infty \to \cal H^\infty} \geq \frac{1}{2\pi}\log N+\frac{\gamma-1}{\pi}+ O(N^{-1/2}).$$ 
The proof is completed. 

\vspace{0.2cm}

\noindent \textbf{AI Disclosure.}
The proof of the logarithmic lower bound was developed with the assistance of OpenAI's ChatGPT-5.5 Plus. After the authors formulated the problem and interacted with ChatGPT over several rounds, ChatGPT identified a suitable test function and supplied a complete proof, which the authors subsequently verified. The original proof invoked the Baker--Harman--Pintz theorem on primes in short intervals \cite{BHP}; the authors later eliminated this dependence by introducing hyperbolic Dirichlet polynomials and replacing the relevant step with an elementary argument. The authors also observed that the matrix-theoretic idea in Section~\ref{subsec-hilbert}---testing the main triangle projection on a discrete Hilbert matrix---is essentially due to S. Kwapie{\'n} and A. Pe{\l}czy{\'n}ski \cite{KP}. This connection was not identified by ChatGPT. Since \cite{KP} does not record the explicit constant $1/\pi$ needed here, the short argument is retained. The final draft has been thoroughly rewritten and reorganized by the authors, who take full responsibility for the contents of this manuscript.

\vspace{0.2cm}

\noindent\textbf{Acknowledgements.}
The authors are grateful to Yanqi Qiu and Lai Jiang for their valuable discussions and helpful comments.

\end{document}